\theoremstyle{plain}%
 \newtheorem{theorem}{Theorem}
\theoremstyle{remark}
\theoremstyle{definition}
\begin{document}

\begin{center}
{\Large An extension of the Chudnovsky algorithm}
\end{center}

\begin{center}
{\textsc{John M. Campbell}} 

 \ 

\end{center}

\begin{abstract}
Using an infinite family of generalizations of the Chudnovsky brothers' series recently obtained via the analytic continuation of the Borwein brothers' formula 
 for Ramanujan-type series of level 1, we apply the Gauss--Salamin--Brent iteration for $\pi$ to obtain a new, Ramanujan-type series that yields more 
 digits per term relative to current world record given by an extension of the Chudnovsky algorithm from Bagis and Glasser that produces about 110 digits per 
 term. We explicitly evaluate the required nested radicals over $ \mathbb{Q}$ involved in the our  summation, which yields about 153 digits per 
 term, and we provide a practical way of implementing our higher-order version of the Chudnovsky algorithm via the the PARI/GP system. An evaluation 
 due to Berndt and Chan for the modular $j$-invariant associated with their order-3315 extension of the Chudnovskys' Ramanujan-type series provides 
 a key to our applications of recursions for the elliptic lambda and elliptic alpha functions. 
\end{abstract}

\noindent {\footnotesize \emph{MSC classifications:} Primary 11Y60; Secondary 65B10, 33C75}

\noindent {\footnotesize \emph{Keywords:} Chudnovsky algorithm, modular $j$-invariant, Ramanujan-type series}

\section{Introduction}
 Famously, the Chudnovsky brothers applied their formula 
\begin{align}
\begin{split}
 & \frac{711822400}{\sqrt{10005}\, \pi } = \\ 
 & \sum_{n=0}^{\infty} \frac{ \left( \frac{1}{6} \right)_{n} \left( \frac{1}{2} \right)_{n} \left( \frac{5}{6} \right)_{n} }{ \left( 1 \right)_n^{3} }
 \left( -\frac{1}{53360} \right)^{3n}
 \left( \frac{13591409}{6} + 90856689 n \right), 
\end{split}\label{Chudnovskymain}
\end{align}
 which they gave in \cite{ChudnovskyChudnovsky1988}, to compute over 8 billion base-10 digits of $\pi$ \cite[p.\ 111]{ArndtHaenel2001}, with the 
 \emph{Chudnovsky algorithm} being based on the Ramanujan-type series given above. The Chudnovsky algorithm is of a seminal nature both in terms 
 of the numerical computation of $\pi$ and within number-theoretic areas concerning modular relations associated with Ramanujan-type series. The 
 Chudnovsky brothers' Ramanujan-type series in \eqref{Chudnovskymain} has been applied in practical ways, and a notable application of 
 \eqref{Chudnovskymain} is given by its implementation in Computer Algebra System (CAS) software such as Mathematica to calculate the digits of $\pi$ 
 \cite[p.\ 111]{ArndtHaenel2001}. This leads us to develop techniques that may be applied to obtain higher-order versions of the Chudnovsky algorithm 
 that provide more digits per term, relative to the approximately 14 decimal digits per term given by the Chudnovskys' series in \eqref{Chudnovskymain}. 
 In this regard, a notable development in the high-precision computation of $\pi$ via Ramanujan-type series is due to Berndt and Chan 
 \cite{BerndtChan2001}, who introduced a higher-order analogue of \eqref{Chudnovskymain} that produces about 74 digits per term. This provided a 
 world record for the series for $\frac{1}{\pi}$ producing the most digits per term \cite{BerndtYee2002}. Shortly afterwards, Takeshi Sato set the world 
 record, in an unpublished communication, for a series for $\frac{1}{\pi}$ yielding about 97 digits per term, with reference to the survey 
 \cite{BaruahBerndtChan2009} by Baruah, Berndt, and Chan on Ramanujan's series for $\frac{1}{\pi}$. In a subsequent paper due to Bagis and Glasser 
 \cite{BagisGlasser2012}, the authors used a special case of a family of Ramanujan-type series due to Berndt and Chan \cite{BerndtChan2001} to 
 produce a Ramanujan-type series yielding 110 digits per term. In this article, we apply a recursive technique derived from the Gauss--Salamin--Brent 
 iteration for $\pi$ \cite[p.\ 363]{BorweinBorwein1988} together with an infinite family of generalizations of \eqref{Chudnovskymain} recently introduced in 
 \cite{Campbell2024}, to obtain an extension of \eqref{Chudnovskymain} and of Berndt and Chan's series and of Bagis and Glasser's series, yielding 
 more digits per term relative to the current world record. 

 The Ramanujan-type series we introduce produces about 153 digits per term and may be implemented in a practical way using the PARI/GP system. 
 A remarkable aspect about the rapidly converging sum we introduce is given by how neither Mathematica nor Maple is able to produce sufficiently 
 high-precision calculations required for the nested radicals over $\mathbb{Q}$ involved in our work. This is in contrast to how Bagis and Glasser relied 
 on Mathematica in \cite{BagisGlasser2012}. 

 As in \cite{BerndtChan2001,BerndtYee2002}, we set 
 $$ \text{{\bf J}}(q) := \frac{1728}{j\left( \frac{3 + \tau}{2} \right)}, $$ 
 with $\text{{\bf J}}_{n} = \text{{\bf J}}\big( e^{-\pi \sqrt{n}} \big)$ for $n > 0$, 
 where the modular $j$-invariant 
 is such that 
 $$ j_{n} := j\left( \frac{3 + \sqrt{-n}}{2} \right) = 1728 \frac{Q_{n}^{3}}{Q_{n}^{3} - R_{n}^{2}}, $$
 writing $Q_{n} := Q\big( -e^{-\pi \sqrt{n}} \big)$ and $R_{n} := R\big( -e^{-\pi \sqrt{n}} \big)$ 
 to denote Eisenstein series of the forms indicated below, 
 with $q = \text{exp}(2 \pi i \tau)$, referring to \cite{BerndtChan2001,BerndtYee2002} for details: 
\begin{align*}
 Q(q) & := 1 + 240 \sum_{k=1}^{\infty} \frac{k^3 q^{k}}{1 - q^{k}}, \\ 
 R(q) & := 1 - 504 \sum_{k=1}^{\infty} \frac{ k^5 q^{k} }{1 - q^{k}}. 
\end{align*}
 The evaluation for $ \text{{\bf J}}_{3315} $ due to Berndt and Chan \cite{BerndtChan2001} is of central importance, for our purposes, and this 
 evaluation is reproduced in \eqref{boldJclosed} below and is given in terms of a closed form for $\lambda_{1105}$, where the function 
 $\lambda_{n}$ was introduced by Ramanujan and defined via a quotient of Dedekind eta-functions, referring to \cite{BerndtChanKangZhang2002} 
 for details. Our method relies on recursions for the elliptic lambda and elliptic alpha functions and builds upon Berndt and Chan's construction based on 
 special values for Eisenstein series and the modular $j$-invariant. 

\subsection{Background and preliminaries}
 In addition to the survey \cite{BaruahBerndtChan2009} referenced above, 
 references as in 
 \cite{ChanCooper2012,ChenGlebovGoenka2022,Chu2011,Guillera2013,HanChen2022,HuberSchultzYe2023,Liu2012,Wan2014,WangYang2022,WeiGong2013} 
 concerning Ramanujan-type series for $\frac{1}{\pi}$ have inspired our construction of a new Ramanujan-type series extending
 the Chudnovsky brothers' series. 
 For background material on 
 the importance of Ramanujan-type series within computational mathematics, number theory, and many other areas, 
 the interested reader is invited to review the provided references and references therein. 
 For the the sake of brevity, we proceed to cover required preliminaries, as below, 
 derived from the \emph{Pi and the AGM} text \cite{BorweinBorwein1987} 
 and related to Berndt and Chan's record-setting series for $\frac{1}{\pi}$ \cite{BerndtChan2001,BerndtYee2002}. 

 The \emph{complete elliptic integral of the first kind} is such that 
\begin{equation}\label{definitionK}
 \text{{\bf K}}(k) := \int_{0}^{\frac{\pi}{2}} \left( 1 - k^2 \sin^2\theta \right)^{-\frac{1}{2}} \, d\theta, 
\end{equation}
 and the expression $k$ in \eqref{definitionK} is referred to as the \emph{modulus}. 
 The \emph{complementary modulus} is such that $k' := \sqrt{1 - k^2}$, 
 and we write $\text{{\bf K}}'(k) := \text{{\bf K}}(k')$. 
 The \emph{elliptic lambda function} $\lambda^{\ast}$ \cite[\S3.2]{BorweinBorwein1987} is such that 
\begin{equation}\label{ellipticlambdadefinition}
 \frac{ \text{{\bf K}}'\left( \lambda^{\ast}(r) \right) }{ 
 \text{{\bf K}}\left( \lambda^{\ast}(r) \right) } = \sqrt{r} 
\end{equation}
 for $r > 0$. The \emph{complete elliptic integral of the second kind} is such that 
\begin{equation}\label{completeEdefinition}
 \text{{\bf E}}(k) := \int_{0}^{\frac{\pi}{2}} \left( 1 - k^2 \sin^2 \theta \right)^{\frac{1}{2}} \, d\theta, 
\end{equation}
 and the special function in \eqref{completeEdefinition}
 may be used to define the \emph{elliptic alpha function} \cite[\S5]{BorweinBorwein1987} such that 
\begin{equation}\label{alphadefinition}
 \alpha(r) = \frac{\pi}{4 \text{{\bf K}}^{2}\left( \lambda^{\ast}(r) 
 \right)} - \sqrt{r} \left( \frac{\text{{\bf E}}\left( \lambda^{\ast}(r) \right)}{ \text{{\bf K}}\left( 
 \lambda^{\ast}(r) \right) } - 1 \right). 
\end{equation}
 \emph{Ramanujan-type series} are such that 
\begin{equation}\label{Rseriesdefinition}
 \frac{1}{\pi} = \sum_{n=0}^{\infty} 
 \frac{ \left( \frac{1}{2} \right)_{n} \left( \frac{1}{s} \right)_{n} \left( 1 - \frac{1}{s} \right)_{n} }{ 
 \left( 1 \right)_{n}^{3} } z^{n} (a + b n) 
\end{equation}
 for $s \in \{ 2, 3, 4, 6 \}$ and for real and algebraic values $a$, $b$, and $z$. The \emph{level} $\ell$ of \eqref{Rseriesdefinition} may defined so that 
 $\ell:=4\sin^2\frac{\pi}{s}$ and refers to the level of modular forms underlying the classical derivations of Ramanujan-type series. Our recent work 
 concerning the Heegner number 163 and its connection to the Chudnovsky algorithm \cite{Campbell2024} included an infinite family of generalizations of 
 \eqref{Chudnovskymain} in terms of the special functions defined in \eqref{ellipticlambdadefinition} and \eqref{alphadefinition}. 

 An infinite family of Ramanujan-type series for the $s = 6$ case of \eqref{Rseriesdefinition} was introduced in the classic \emph{Pi and the AGM} text 
 \cite[\S5.5]{BorweinBorwein1987}. However, this infinite family strictly applies to the case whereby $z > 0$ in \eqref{Rseriesdefinition}, in contrast to the 
 Chudnovskys' series in \eqref{Chudnovskymain}. The Borwein brothers indicated that their infinite family of level $\ell = 1$ Ramanujan-type 
 series could be extended, with the use of analytic continuation, in order to allow for negative values of $z$. 
 This extension was given recently in \cite{Campbell2024} and is reproduced below and 
 was applied in \cite{Campbell2024} to prove a conjecture due to Bagis and Glasser 
 on a Ramanujan-type series related to the Heegner number 163 \cite{BagisGlasser2013}. 

 Throughout this paper, we set 
\begin{equation}\label{xrdefinition}
 x = x(r) = 4 \left( \left( \lambda^{\ast}(r) \right)^{2} - \left( \lambda^{\ast}(r) \right)^{4} \right). 
\end{equation}
 We set $s = 6$, and we let $r > 1$. The formula in \eqref{Rseriesdefinition} then holds for the following values \cite{Campbell2024}: 
\begin{align}
 & z = -\frac{27 x}{(1 - 4 x)^3}, \ \ \ b = \frac{(8x+1) \sqrt{1-x} \sqrt{r} }{ \left( 1 - 4 x \right)^{3/2} }, \\ 
 & a = \frac{2(1 - 4 x) \alpha(r) + (4 x - 1 + \sqrt{1-x}) \sqrt{r} }{2 \left( 1 - 4 x \right)^{3/2}}. \label{acoefficient}
\end{align}

\section{Main consturction}\label{sectionMain}
 Our extension of the Chudnovsky algorithm requires the special values for $\lambda^{\ast}$ and $\alpha$ highlighted in Theorem \ref{maintheorem} 
 below. Our construction based on these closed forms requires the closed forms for the expressions $t_{3315}$ and $\text{{\bf J}}_{3315}$ introduced 
 in \cite{BerndtChan2001} and indicated below. 

 Berndt and Chan \cite{BerndtChan2001} introduced and proved the following formula
 for an infinite family of Ramanujan-type series, referring to \cite{BerndtChan2001,BerndtYee2002} for details: 
\begin{equation}\label{BCfamily}
 \frac{6}{\sqrt{m}\sqrt{1 - \text{{\bf J}}_{m}}} \frac{1}{\pi} = \sum_{n = 0}^{\infty} 
 \frac{ \left( \frac{1}{6} \right)_{n} \left( \frac{5}{6} \right)_{n} 
 \left( \frac{1}{2} \right)_{n} }{ \left( 1 \right)_{n}^{3} } \text{{\bf J}}_{m}^{n} 
 (6n + 1 - t_{m}). 
\end{equation}
 The following explicit evaluation for $t_{3315}$ was proved in \cite{BerndtChan2001}: 
\begin{align*}
 t_{3315} = &
 \frac{1095255033002752301233099478037584}{2050242335692983321671746996556833} + \\ 
 & \frac{1006588064225996719872149534306400}{34854119706780716468419698941466161} \sqrt{5} \sqrt{17} + \\
 & \frac{692779168175128551453280427070000}{34854119706780716468419698941466161} \sqrt{17} - \\
 & \frac{136434536163779492503565618457696}{2050242335692983321671746996556833} \sqrt{5} + \\
 & \frac{400179322879781860521299209248000}{26653150364008783181732710955238829} \sqrt{13} + \\
 & \frac{1077564413015882021519209726762688}{453103556188149314089456086239060093} \sqrt{5} \sqrt{13} \sqrt{17} + \\
 & \frac{120226784218523863048087030809600}{64729079455449902012779440891294299} \sqrt{13} \sqrt{17} + \\
 & \frac{239369594240980944219359445009600}{26653150364008783181732710955238829} \sqrt{5} \sqrt{13}. 
\end{align*}
 The following closed form for $\lambda_{1105}$ from \cite{BerndtChan2001} 
 allows us to obtain the value for $\text{{\bf J}}_{3315}$ required for the $m = 3315$
 case of \eqref{BCfamily}: 
 $$ \lambda_{1105} = \frac{\left(1+\sqrt{5}\right)^{12} \left(4+\sqrt{17}\right)^3 \left(8+\sqrt{65}\right)^3 \left(15+\sqrt{221}\right)^3}{32768}. $$ 
 Explicitly, we have that: 
\begin{equation}\label{boldJclosed}
 \text{{\bf J}}_{3315} 
 = -\frac{64 \lambda_{1105}^{2}}{ \left( \lambda_{1105}^{2} - 1 \right) \left( 9 \lambda_{1105}^{2} - 1 \right)^{3} }. 
\end{equation}
 The $m = 3315$ case of \eqref{BCfamily}
 gives us, in an equivalent way 
 via the application of Clausen's hypergeometric product formula in \cite{BerndtChan2001}, the $r = 3315$ case of 
 \eqref{Rseriesdefinition} for the specified values among 
 \eqref{xrdefinition}--\eqref{acoefficient}, with $s = 6$. 
 
 As suggested above, a remarkable aspect of Theorem \ref{maintheorem} is given by how neither Mathematica nor Maple 
 is not able to compute required values as in \eqref{boldx3315}, 
 which require higher-precision numerical estimates made available via the PARI/GP software. 

\begin{theorem}\label{maintheorem}
 Let 
\begin{equation}\label{boldx3315}
 \text{{\bf x}}_{3315} = \frac{1}{4} - 
 \frac{3}{8} \frac{ \left(-\frac{1}{ \text{{\bf J}}_{3315} } \right)^{\frac{2}{3}}}{ 
 \left( \sqrt{ \frac{ \text{{\bf J}}_{3315} - 1 }{\text{{\bf J}}_{3315}} } - 1 \right)^{\frac{1}{3}} } 
 + \frac{3}{8} \left( \frac{1 - \sqrt{ \frac{ \text{{\bf J}}_{3315} - 1}{\text{{\bf J}}_{3315}} }}{\text{{\bf J}}_{3315}} 
 \right)^{\frac{1}{3}}. 
\end{equation}
 Then 
\begin{equation}\label{eqlambdastar3315}
 \lambda^{\ast}(3315) = \frac{\sqrt{1 - \sqrt{1 - \text{{\bf x}}_{3315}}}}{\sqrt{2}} 
\end{equation}
 and 
\begin{equation}\label{eqlambdastar13260}
 \lambda^{\ast}(13260) = \frac{1 - \sqrt{1 - \left( \lambda^{\ast}(3315) \right)^{2} }}{1 + 
 \sqrt{1 - \left( \lambda^{\ast}(3315) \right)^{2} }}, 
\end{equation}
 and, for the same value of $\text{{\bf x}}_{3315}$ in \eqref{boldx3315}, 
 the elliptic alpha value $ \alpha(3315) $ equals 
$$ \text{{\footnotesize $\frac{ \frac{1}{2} \sqrt{\frac{1105}{3}} \sqrt{1 - \text{{\bf J}}_{3315}} (1 - t_{3315}) 2 (1 - 4 \text{{\bf x}}_{3315})^{\frac{3}{2}} - 
 (4 \text{{\bf x}}_{3315} - 1 + \sqrt{1 - \text{{\bf x}}_{3315} }) \sqrt{3315}}{2 (1 - 4 \text{{\bf x}}_{3315})},$}} $$
 with 
\begin{equation}\label{eqalphaat13260}
 \alpha(13260) = 
 \frac{4 \alpha(3315) - 2 \sqrt{3315} \left( \lambda^{\ast}(3315) \right)^2}{ \left(1 + \sqrt{1 - 
 \left( \lambda^{\ast}(3315) \right)^{2} } \right)^2}. 
\end{equation}
\end{theorem}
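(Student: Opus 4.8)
The plan is to derive the four displayed identities from the equivalence, recalled just above, between the $m=3315$ instance of \eqref{BCfamily} and the $r=3315$ instance of \eqref{Rseriesdefinition} with $s=6$ and the values \eqref{xrdefinition}--\eqref{acoefficient}, together with a Cardano-type inversion of the rational substitution $z=-27x/(1-4x)^{3}$ and the classical quadratic (Landen) recursions for $\lambda^{\ast}$ and $\alpha$ recorded in \cite{BorweinBorwein1987}.

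The first step is to read off the data attached to $r=3315$. By that equivalence, the $r=3315$ instance of \eqref{Rseriesdefinition} with the values \eqref{xrdefinition}--\eqref{acoefficient} is the same series as the $m=3315$ instance of \eqref{BCfamily} after the left side of the latter is written as $\tfrac{1}{\pi}$; since the two sides carry the common hypergeometric coefficients $(\tfrac{1}{2})_{n}(\tfrac{1}{6})_{n}(\tfrac{5}{6})_{n}/(1)_{n}^{3}$, matching the geometric ratios and the affine parts forces $z(3315)=\mathbf{J}_{3315}$, $b(3315)=\sqrt{3315}\sqrt{1-\mathbf{J}_{3315}}$, and $a(3315)=\tfrac{1}{6}\sqrt{3315}\sqrt{1-\mathbf{J}_{3315}}(1-t_{3315})$. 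Writing $x=x(3315)$ as in \eqref{xrdefinition}, the equality $-27x/(1-4x)^{3}=\mathbf{J}_{3315}$ says that $x$ is a root of the cubic $\mathbf{J}_{3315}(1-4x)^{3}+27x=0$, and the equality for $a(3315)$, fed into \eqref{acoefficient}, becomes a single linear equation for $\alpha(3315)$.

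Next I would invert the cubic. The substitution $u=1-4x$ turns $\mathbf{J}_{3315}(1-4x)^{3}+27x=0$ into the depressed cubic $u^{3}+pu-p=0$ with $p=-27/(4\mathbf{J}_{3315})$, whose discriminant equals $27^{3}(1-\mathbf{J}_{3315})\big/\big(16\,\mathbf{J}_{3315}^{3}\big)$; this is negative since $\mathbf{J}_{3315}$ is a small negative real (clear from \eqref{boldJclosed}), so the cubic has a unique real root, given by Cardano's formula with real radicals. Putting $w=\sqrt{(\mathbf{J}_{3315}-1)/\mathbf{J}_{3315}}$, so that $w^{2}-1=-1/\mathbf{J}_{3315}$, a routine regrouping of the two real cube roots in Cardano's formula identifies that root with the number $\mathbf{x}_{3315}$ of \eqref{boldx3315}; as $x(3315)$ is real and solves the same cubic, $x(3315)=\mathbf{x}_{3315}$. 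Solving the quadratic $4(\lambda^{\ast}(3315))^{4}-4(\lambda^{\ast}(3315))^{2}+\mathbf{x}_{3315}=0$ coming from \eqref{xrdefinition} and choosing the branch $(\lambda^{\ast}(3315))^{2}=\tfrac{1}{2}\big(1-\sqrt{1-\mathbf{x}_{3315}}\big)$---legitimate because $\lambda^{\ast}$ is strictly decreasing with $\lambda^{\ast}(1)=1/\sqrt2$, hence $\lambda^{\ast}(3315)<1/\sqrt2$---yields \eqref{eqlambdastar3315}. Solving the linear equation for $\alpha(3315)$ noted above, and using $\sqrt{3315}/6=\tfrac{1}{2}\sqrt{1105/3}$ since $3315=3\cdot1105$, then produces the stated closed form for $\alpha(3315)$.

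The passages $3315\mapsto 13260=4\cdot3315$ are instances of the descending Landen transformation $k\mapsto k_{1}=(1-k')/(1+k')$, under which $\mathbf{K}(k_{1})=\tfrac{1+k'}{2}\mathbf{K}(k)$, $\mathbf{E}(k_{1})=\tfrac{1}{1+k'}\big(\mathbf{E}(k)+k'\mathbf{K}(k)\big)$, and the quotient $\mathbf{K}'/\mathbf{K}$ doubles \cite{BorweinBorwein1987}. The last fact gives $k_{1}=\lambda^{\ast}(4r)$ whenever $k=\lambda^{\ast}(r)$, so with $k=\lambda^{\ast}(3315)$ one obtains $\lambda^{\ast}(13260)=(1-k')/(1+k')$, which is \eqref{eqlambdastar13260}. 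Substituting the first two relations into \eqref{alphadefinition} at $4r=13260$ and eliminating $\pi/\big(4\mathbf{K}^{2}(k)\big)$ by means of the defining expression \eqref{alphadefinition} for $\alpha(3315)$ reduces, after the elementary identity $2(1+k')^{2}-4(1+k')=-2k^{2}$, to \eqref{eqalphaat13260}. I expect the main obstacle to be the Cardano step: steering the real cube roots and the sign of $\mathbf{J}_{3315}$ correctly through the simplification so that it reproduces \eqref{boldx3315} verbatim. The observation that removes any case analysis is that the cubic has negative discriminant, so its unique real root is forced to be $x(3315)$ with no comparison of conjugate roots.
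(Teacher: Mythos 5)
Your proposal is correct and follows essentially the same route as the paper: identify $z,a,b$ at $r=3315$ from the normalized $m=3315$ case of \eqref{BCfamily}, invert the cubic $z=-27x/(1-4x)^3$ to get $\text{{\bf x}}_{3315}$, solve \eqref{xrdefinition} for $\lambda^{\ast}(3315)$ and \eqref{acoefficient} for $\alpha(3315)$, then pass to $13260=4\cdot 3315$ via the quadratic recursions \eqref{GSBalpha}--\eqref{GSBlambda}. The only difference is one of detail: you re-derive those recursions from the Landen transformation and carry out the Cardano inversion and branch selection explicitly, whereas the paper simply cites the recursions from \emph{Pi and the AGM} and asserts the algebraic inversions.
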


\begin{proof}
 We normalize the $m = 3315$ case of \eqref{BCfamily}, writing 
\begin{align*}
 \frac{1}{\pi} = \sum_{n = 0}^{\infty} 
 \frac{ \left( \frac{1}{6} \right)_{n} \left( \frac{5}{6} \right)_{n} 
 \left( \frac{1}{2} \right)_{n} }{ \left( 1 \right)_{n}^{3} } \text{{\bf J}}_{3315}^{n} 
 \left( n + \frac{1 - t_{3315}}{6} \right) \sqrt{3315}\sqrt{1 - \text{{\bf J}}_{3315}}, 
\end{align*}
 giving an equivalent formulation of the $r = 3315$ case of \eqref{Rseriesdefinition}, again for $s = 6$ and for the specified values listed among 
 \eqref{xrdefinition}--\eqref{acoefficient}. This equivalence gives us that 
 \begin{equation}\label{J3315intermsofx}
 \text{{\bf J}}_{3315} = -\frac{27 x(3315)}{(1 - 4 x(3315))^3}. 
\end{equation}
 Solving for $x = x(3315)$ according to the relation in \eqref{J3315intermsofx}, we obtain the right-hand expansion in \eqref{boldx3315}, writing 
 $ \text{{\bf x}}_{3315} = x(3315)$. According to the relation between $x(r)$ and $\lambda^{\ast}(r)$ in \eqref{xrdefinition}, we solve for 
 $ \lambda^{\ast}(3315)$, producing the right-hand value in \eqref{eqlambdastar3315}. 

 The recursions whereby 
\begin{equation}\label{GSBalpha}
 \alpha(4 N) = \frac{ 4 \alpha(N) - 2 \sqrt{N} \left( \lambda^{\ast}(N) \right)^{2} }{ 
 \left( 1 + \sqrt{1 - \left( \lambda^{\ast}(N) \right)^{2}} \right)^{2} } 
\end{equation}
 and 
\begin{equation}\label{GSBlambda}
 \lambda^{\ast}(4N) = \frac{1 - \sqrt{1 - \left( \lambda^{\ast}(N) \right)^{2}}}{1 + 
 \sqrt{1 - \left( \lambda^{\ast}(N) \right)^{2}}} 
\end{equation}
 are known to be equivalent to the Gauss--Salamin--Brent iteration for $\pi$ \cite[p.\ 363]{BorweinBorwein1988} 
 and are proved in the \emph{Pi and the AGM} text \cite[pp.\ 158--159]{BorweinBorwein1987}. 
 The latter recursion in \eqref{GSBlambda} gives us the formula for
 $\lambda^{\ast}(13260)$ in \eqref{eqlambdastar13260}. 
 Again for $r = 3315$, we set the right-hand side of \eqref{acoefficient}
 to be equal to 
 $$ \frac{1-t_{3315}}{6} \sqrt{3315} \sqrt{1 - \text{{\bf J}}_{3315}}, $$ 
 and, by solving for $\alpha(3315)$, we obtain the given expression of $\alpha(3315)$
 involving $\text{{\bf x}}_{3315}$ and $\text{{\bf J}}_{3315}$. 
 The recursion in \eqref{GSBalpha} then gives us the 
 formula for $ \alpha(13260) $ in \eqref{eqalphaat13260}. 
\end{proof}

 Theorem \ref{maintheorem} allows us to express $\lambda^{\ast}(13260)$ and $\alpha(13260)$ explicitly via nested radicals over $\mathbb{Q}$, 
 and, as illustrated in Section \ref{sectionNumerical} below, the PARI/GP system provides an efficient way of numerically computing these required values. 
 By again setting $s = 6$ in \eqref{Rseriesdefinition}, and by again enforcing the relations among 
 \eqref{xrdefinition}--\eqref{acoefficient}, by then setting $r = 13260$, Theorem \ref{maintheorem} 
 allows us to explicitly evaluate the $a$- and $b$- and $z$-values 
 within the summand in \eqref{Rseriesdefinition}. 
 This gives us a new Ramanujan-type series that may be seen
 as accelerating the convergence of the Chudnovsky brothers' 
 series, the Berndt--Chan series, and Bagis and Glasser's series. 

\section{Numerical analysis}\label{sectionNumerical}
 For a hypergeometric series converging to $L$ and with an absolute convergence rate of $\mu$, the number of digits per term is, informally, about 
 $ \text{log}_{10}\big( \frac{1}{\mu} \big)$. For example, by setting $\mu$ as the absolute convergence rate of the Chudnovsky's series in 
 \eqref{Chudnovskymain}, we find that $\text{log}_{10}\big( \frac{1}{\mu} \big) \approx 14.1816$, 
 and this agrees with how it is well known that the Chudnovsky algorithm produces about 14 digits per term. 
 Similarly, by computing $\text{log}_{10}\big( -\frac{1}{\text{{\bf J}}_{3315}} \big) \approx 75.3178$, 
 this agrees with how it has been stated that the Berndt--Chan series produces about 74 digits per term. 
 The PARI/GP input below (where display breaks are permitted) provides a practical and efficient way of implementing 
 the Ramanujan-type series introduced in this paper, 
 and allows us to numerically compute its rate of convergence, as below. 
 Remarkably, Mathematica and Maple are unable to 
 provide sufficiently precise numerical evaluations 
 related to the required algebraic expressions over $\mathbb{Q}$ involving nested radicals. 
{\footnotesize
\begin{verbatim}
default(realprecision, 1000); lambda1105=((sqrt(5) + 1)/2)^12*(4 + 
sqrt(17))^3*((15 + sqrt(221))/2)^3*(8 + sqrt(65))^3;
boldJ3315=-((64*lambda1105^2)/((lambda1105^2 - 1)*(9*lambda1105^2 - 
1)^3));
x3315=1/8*(2 - (3*(-(1/boldJ3315))^(2/3))/(-1 + sqrt((-1 + 
boldJ3315)/boldJ3315))^(1/3) + 3*((1 - sqrt((-1 + 
boldJ3315)/boldJ3315))/boldJ3315)^(1/3));
lambdaast3315=sqrt(1-sqrt(1-x3315))/sqrt(2);
lambdaast13260=(1-sqrt(1-lambdaast3315^2))/(1+sqrt(1-
lambdaast3315^2));
x13260=4*(lambdaast13260^2-lambdaast13260^4);
z13260=-27*x13260/(1-4*x13260)^3;
t3315 = 1095255033002752301233099478037584/
2050242335692983321671746996556833 + 
1006588064225996719872149534306400/
34854119706780716468419698941466161*sqrt(17)*sqrt(5) + 
692779168175128551453280427070000/
34854119706780716468419698941466161*sqrt(17) - 
136434536163779492503565618457696/
2050242335692983321671746996556833*sqrt(5) + 
400179322879781860521299209248000/
26653150364008783181732710955238829*sqrt(13) +
1077564413015882021519209726762688/
453103556188149314089456086239060093*sqrt(13)*sqrt(17)*sqrt(5) +
120226784218523863048087030809600/
64729079455449902012779440891294299*sqrt(17)*sqrt(13) +
239369594240980944219359445009600/
26653150364008783181732710955238829*sqrt(13)*sqrt(5);
alpha3315 = (1/2*sqrt(1105/3)*sqrt(1 - boldJ3315)*(1 - 
t3315)*2*(1 - 4*x3315)^(3/2) - (4*x3315 - 1 + sqrt(1 - 
x3315))*sqrt(3315))/(2*(1 - 4*x3315));
alpha13260 = (4*alpha3315 - 2*sqrt(3315)*lambdaast3315^2)/(1 + 
sqrt(1 - lambdaast3315^2))^2;
b13260 = ((8*x13260 + 1)*sqrt(1 - x13260)*sqrt(13260))/(1 - 
4*x13260)^(3/2);
a13260 = (2*(1 - 4*x13260)*alpha13260 + (4*x13260 - 1 + sqrt(1 - 
x13260))*sqrt(13260))/(2*(1 - 4*x13260)^(3/2));
\end{verbatim}}

 By inputting 
{\footnotesize
\begin{verbatim}
log(-1/z13260)/log(10)
\end{verbatim}}
\noindent into PARI/GP after the preceding input, we obtain a value of about 153.8732. We would thus expect the Ramanujan-type series introduced in Section 
 \ref{sectionMain} to provide about 153 digits per term. This turns out to be an accurate estimate, as illustrated below. 

 Inputting 
{\footnotesize
\begin{verbatim}
sum(n=0,0,prod(i=0,n-1,1/2+i)*prod(i=0,n-1,1/6+i)*prod(i=0,n-1,5/6+
i)/(prod(i=0,n-1,1+i))^3*z13260^n*(a13260+b13260*n))
\end{verbatim}}
\noindent into PARI/GP, with the above indicated partial sum consisting of the initial term, if we then subtract $\frac{1}{\pi}$, thie resultant difference is about 
 $1.073 \times 10^{-153}$, which agrees with our new Ramanujan-type series producing about 153 digits per term. Explicitly, we have that the above 
 input produces a numerical evaluation beginning with the following, and we may verify that this agrees with $\frac{1}{\pi}$ for 152 decimal digits after the 
 decimal point, with the first disagreement occurring with the last $1$-digit shown below. 
{\footnotesize
\begin{verbatim}
0.31830988618379067153776752674502872406891929148091289749533468811
7793595268453070180227605532506171912145685453515916073785823692229
157305755934821463401
\end{verbatim}}
 By then computing the decimal expansion of the sum of the first two terms of the new Ramanujan-type series constructed in this paper, this decimal 
 expansion begins as follows, and we may verify that it agrees with the decimal expansion of $\frac{1}{\pi}$ up to and including the last pair of the form 
 {\tt 99} shown below. This gives us that the sum of the first two terms of our extension of the Chudnovsky algorithm provide us with an accuracy of 
 $152 + 154 $ digits after the decimal point. 
{\footnotesize
\begin{verbatim}
0.31830988618379067153776752674502872406891929148091289749533468811
7793595268453070180227605532506171912145685453515916073785823692229
1573057559348214633996784584799338748181551461554927938506153774347
8579243479532338672478048344725802366476022844539951143188092378017
380534791224097882187387568817105744619977773
\end{verbatim}}
 Further evaluations for the partial sums of the Ramanujan-type series we have introduced yield similar results, in view of the expected number of digits per 
 term produced. In addition to how this new series extends the Chudnovsky brothers' Ramanujan-type series and Berndt and Chan's record-breaking 
 series for $\frac{1}{\pi}$, this is also of interest in terms of 
 the relation to the work of the Borwein brothers \cite{BorweinBorwein1993} 
 concerning Ramanujan-type series of class three, 
 which introduced a Ramanujan-type series that produces about 50 digits per term, 
 which provided a world record \cite{BerndtChanKangZhang2002} 
 prior to the world record set by Berndt and Chan in 2001 \cite{BerndtChan2001}. 
 We leave it to a separate project to further extend the Chudnovsky algorithm and Berndt and Chan's Ramanujan-type series. 

\subsection*{Acknowledgements}
The author was supported by a Killam Postdoctoral Fellowship from the Killam Trusts.

\bibliographystyle{amsplain}

\begin{thebibliography}{10}

\bibitem{ArndtHaenel2001}
 J. Arndt and C. Haenel, 
 \textit{Pi---unleashed},
 Springer-Verlag, Berlin, 2001.

\bibitem{BagisGlasser2012}
 N. D. Bagis and M. L. Glasser, 
 \textit{Conjectures on the evaluation of alternative modular bases and formulas approximating {$1/\pi$}}, 
 J. Number Theory \textbf{132} (2012), 2353--2370. 

\bibitem{BagisGlasser2013}
 N. D. Bagis and M. L. Glasser, 
 \textit{Ramanujan type {$1/\pi$} approximation formulas}, 
 J. Number Theory \textbf{133} (2013), 3453--3469. 

\bibitem{BaruahBerndtChan2009}
 N. D. Baruah, B. C. Berndt, and H. H.\ Chan, 
 \textit{Ramanujan's series for {$1/\pi$}: a survey}, 
 Amer. Math. Monthly 
 \textbf{116} (2009), 567--587. 

\bibitem{BerndtChan2001}
 B.\ C.\ Berndt and H.\ H.\ Chan, 
 \textit{Eisenstein series and approximations to {$\pi$}}, 
 Illinois J. Math.\ 
 \textbf{45} (2001), 75--90. 

\bibitem{BerndtChanKangZhang2002}
 B. C. Berndt, H. H. Chan, S.-Y.\ Kang, and L.-C.\ Zhang, 
 \textit{A certain quotient of eta-functions found in {R}amanujan's lost notebook}, 
 Pacific J. Math.\ 
 \textbf{202} (2002), 267--304. 

\bibitem{BerndtYee2002}
 B.\ C.\ Berndt and A.\ J.\ Yee, 
 \textit{Ramanujan's contributions to {E}isenstein series, especially in his lost notebook}, 
 Number theoretic methods ({I}izuka, 2001), 
 Kluwer Acad. Publ., Dordrecht (2002), 31--53. 

\bibitem{BorweinBorwein1993}
 J.\ M.\ Borwein and P. B. Borwein, 
 \textit{Class number three {R}amanujan type series for {$1/\pi$}}, 
 J. Comput. Appl. Math. 
 \textbf{46} (1993), 281--290. 

\bibitem{BorweinBorwein1988}
 J.\ M.\ Borwein and P. B. Borwein, 
 \textit{More {R}amanujan-type series for {$1/\pi$}}, 
 Ramanujan revisited ({U}rbana-{C}hampaign, {I}ll., 1987), 
 Academic Press, Boston, MA (1988), 359--374. 

\bibitem{BorweinBorwein1987}
 J. M. Borwein and P. B. Borwein, 
 \textit{Pi and the {AGM}}, 
 John Wiley \& Sons, Inc., New York, 1987. 

\bibitem{Campbell2024}
 J.\ M.\ Campbell, 
 \textit{On a Ramanujan-type series associated with the Heegner number 163}, 
 J. Number Theory 
 \textbf{259} (2024), 322--329.

\bibitem{ChanCooper2012}
 H. H.\ Chan and S.\ Cooper, 
 \textit{Rational analogues of {R}amanujan's series for {$1/\pi$}}, 
 Math. Proc. Cambridge Philos. Soc. 
 \textbf{153} (2012), 361--383. 

\bibitem{ChenGlebovGoenka2022}
 I.\ Chen, G. Glebov, and R.\ Goenka, 
 \textit{Chudnovsky-{R}amanujan type formulae for non-compact arithmetic triangle groups}, 
 J. Number Theory 
 \textbf{241} (2022), 603--654. 

\bibitem{Chu2011}
 W.\ Chu, 
 \textit{Dougall's bilateral {${}_2H_2$}-series and {R}amanujan-like {$\pi$}-formulae}, 
 Math. Comp. 
 \textbf{80} (2011), 2223--2251. 

\bibitem{ChudnovskyChudnovsky1988}
 D.\ V.\ Chudnovsky and G.\ V.\ Chudnovsky, 
 \textit{Approximations and complex multiplication according to {R}amanujan}, 
 Ramanujan revisited ({U}rbana-{C}hampaign, {I}ll., 1987), 
 Academic Press, Boston, MA (1988), 375--472. 

\bibitem{Guillera2013}
 J. Guillera, 
 \textit{More hypergeometric identities related to {R}amanujan-type series}, 
 Ramanujan J. 
 \textbf{32} (2013), 5--22. 
 
\bibitem{HanChen2022}
 X.-F.\ Han and C.-P.\ Chen, 
 \textit{Asymptotic results of the remainder in a {R}amanujan series for {$1/\pi$}}, 
 Expo. Math. 
 \textbf{40} (2022), 931--946. 

\bibitem{HuberSchultzYe2023}
 T.\ Huber, D.\ Schultz, and D.\ Ye, 
 \textit{Ramanujan-{S}ato series for {$1/\pi $}}, 
 Acta Arith. 
 \textbf{207} (2023), 121--160. 

\bibitem{Liu2012}
 Z.-G. Liu, 
 \textit{A summation formula and {R}amanujan type series}, 
 J. Math. Anal. Appl. 
 \textbf{389} (2012), 1059--1065. 
 
\bibitem{Wan2014}
 J. G. Wan, 
 Series for {$1/\pi$} using {L}egendre's relation, 
 Integral Transforms Spec. Funct. 
 \textbf{25} (2014), 1--14. 

\bibitem{WangYang2022}
 L.\ Wang and Y.\ Yang, 
 \textit{Ramanujan-type {$1/\pi $}-series from bimodular forms}, 
 Ramanujan J. 
 \textbf{59} (2022), 831--882. 

\bibitem{WeiGong2013}
 C.\ Wei and D.\ Gong, 
 \textit{Extensions of {R}amanujan's two formulas for {$1/\pi$}}, 
 J. Number Theory 
 \textbf{133} (2013), 2206--2216. 

\end{thebibliography}

 Department of Mathematics and Statistics 

 Dalhousie University 

 Halifax, Nova Scotia B3H 4R2 

 {\tt jmaxwellcampbell@gmail.com}

\end{document}